\newcommand{\numberseries}{\bfseries}   %Fontseries used for numbering
\newlength{\thmtopspace}                %Space above theorem
\newlength{\thmbotspace}                %Space below theorem
\newlength{\thmheadspace}               %Space after theorem label
\newlength{\thmindent}                  %For indenting
\newtheoremstyle{bfupright head,slanted body}
                {\thmtopspace}{\thmbotspace}
                {\slshape}{\thmindent}{\bfseries}{.}{\thmheadspace}
                {{\numberseries \thmnumber{#2\;}}\thmnote{#3}}
\newtheoremstyle{bfupright head,upright body}
                {\thmtopspace}{\thmbotspace}
                {\upshape}{\thmindent}{\bfseries}{.}{\thmheadspace}
                {{\numberseries \thmnumber{#2\;}}\thmnote{#3}}
\newtheoremstyle{fixed bf head,slanted body}
                {\thmtopspace}{\thmbotspace}{\slshape}
                {\thmindent}{\bfseries}{.}{\thmheadspace}
                {{\numberseries \thmnumber{#2\;}}\thmname{#1}\thmnote{ (#3)}}
\newtheoremstyle{fixed bf head,upright body}
                {\thmtopspace}{\thmbotspace}{\upshape}
                {\thmindent}{\bfseries}{.}{\thmheadspace}
                {{\numberseries \thmnumber{#2\;}}\thmname{#1}\thmnote{ (#3)}}
\newtheoremstyle{numbered paragraph}
                {\thmtopspace}{\thmbotspace}{\upshape}
                {\thmindent}{\upshape}{}{\thmheadspace}
                {{\numberseries \thmnumber{#2.}}}
\newtheoremstyle{unnumbered paragraph}
                {\thmtopspace}{\thmbotspace}{\upshape}
                {\parindent}{\upshape}{}{0pt}
\theoremstyle{bfupright head,slanted body}
\newtheorem{res}{}[section]             \newtheorem*{res*}{}
\theoremstyle{bfupright head,upright body}
               \newtheorem*{bfhpg*}{}
\theoremstyle{fixed bf head,slanted body}
\newtheorem{thm}[res]{Theorem}          \newtheorem*{thm*}{Theorem}
\newtheorem{lem}[res]{Lemma}            \newtheorem*{lem*}{Lemma}
\theoremstyle{fixed bf head,upright body}
       \newtheorem*{dfn*}{Definition}
           \newtheorem*{rmk*}{Remark}
\theoremstyle{numbered paragraph}
\newtheorem{ipg}[res]{}
\newlength{\thmlistleft}        %leftmargin
\newlength{\thmlistright}       %rightmargin
\newlength{\thmlistpartopsep}   %partopsep
\newlength{\thmlisttopsep}      %topsep
\newlength{\thmlistparsep}      %parsep
\newlength{\thmlistitemsep}     %itemsep
\newcounter{eqc} 
\newenvironment{eqc}{\begin{list}{\upshape (\textit{\roman{eqc}})}%
    {\usecounter{eqc}%
      \setlength{\leftmargin}{\thmlistleft}%
      \setlength{\labelwidth}{\thmlistleft}%
      \setlength{\rightmargin}{\thmlistright}%
      \setlength{\partopsep}{\thmlistpartopsep}%
      \setlength{\topsep}{\thmlisttopsep}%
      \setlength{\parsep}{\thmlistparsep}%
      \setlength{\itemsep}{\thmlistitemsep}}}%
  {\end{list}}%
\newcommand{\eqclbl}[1]{{\upshape(\textit{#1})}}
\newcommand{\pgref}[1]{\ref{#1}}
\newcommand{\thmref}[2][Theorem~]{#1\pgref{thm:#2}}
\newcommand{\lemref}[2][Lemma~]{#1\pgref{lem:#2}}
\renewcommand{\eqref}[1]{(\pgref{eq:#1})}
\newcommand{\thmcite}[2][?]{\cite[thm.~#1]{#2}}
\newcommand{\prpcite}[2][?]{\cite[prop.~#1]{#2}}
\newcommand{\lemcite}[2][?]{\cite[lem.~#1]{#2}}
\numberwithin{equation}{res}
\def\urltilda{\kern -.15em\lower .7ex\hbox{\~{}}\kern .04em} 
\newcommand{\setof}[3][\mspace{1mu}]{\{#1#2 \mid #3#1\}}
\newcommand{\ZZ}{\mathbb{Z}}
\newcommand{\dis}{\:\is\:}
\newcommand{\m}{\mathfrak{m}}
\newcommand{\p}{\mathfrak{p}}
\newcommand{\blank}{{\scriptstyle\stackrel{-}{}}}
\newcommand{\is}{\cong}
\renewcommand{\le}{\leqslant}
\renewcommand{\ge}{\geqslant}
\newcommand{\lra}{\longrightarrow}
\newcommand{\mapdef}[4][\rightarrow]{\nobreak{#2\colon #3 #1 #4}}
\newcommand{\Ker}[1]{\nobreak{\operatorname{Ker}#1}}
\renewcommand{\Im}[1]{\nobreak{\operatorname{Im}#1}}
\newcommand{\dif}[2][]{{\partial}^{#2}_{#1}}
\renewcommand{\H}[2][]{\operatorname{H}_{#1}(#2)}
\newcommand{\Shift}[2][]{\mathsf{\Sigma}^{#1}{#2}}
\newcommand{\Tha}[2]{#2_{{\scriptscriptstyle\le}#1}}
\newcommand{\Thb}[2]{#2_{{\scriptscriptstyle\ge}#1}}
\newcommand{\Tsa}[2]{#2_{{\scriptscriptstyle\subset}#1}}
\newcommand{\Tsb}[2]{#2_{{\scriptscriptstyle\supset}#1}}
\newcommand{\SpecR}{\operatorname{Spec}R}
\newcommand{\supp}[2][R]{\operatorname{supp}_{#1}#2}
\newcommand{\cosupp}[2][R]{\operatorname{cosupp}_{#1}#2}
\newcommand{\id}[2][R]{\operatorname{id}_{#1}#2}
\newcommand{\pd}[2][R]{\operatorname{pd}_{#1}#2}
\newcommand{\Hom}[3][R]{\operatorname{Hom}_{#1}(#2,#3)}
\newcommand{\RHom}[3][R]{\operatorname{\mathbf{R}Hom}_{#1}(#2,#3)}
\newcommand{\Ext}[4][R]{\operatorname{Ext}_{#1}^{#2}(#3,#4)}
\newcommand{\tp}[3][R]{\nobreak{#2\otimes_{#1}#3}}
\newcommand{\Ltp}[3][R]{\nobreak{#2\otimes_{#1}^{\mathbf{L}}#3}}
\newcommand{\Cat}[2]{{\mathsf{#2}}(#1)}
\newcommand{\D}[1][R]{\Cat{#1}{D}}
\begin{document}

\title[Injective Modules under Faithfully Flat Ring
Extensions]{Injective Modules under Faithfully Flat\\ Ring Extensions}

\author[Lars Winther Christensen]{Lars Winther Christensen}

\email{lars.w.christensen@ttu.edu}

\urladdr{http://www.math.ttu.edu/\urltilda lchriste}

\author[Fatih K\"{o}ksal]{Fatih K\"{o}ksal}

\email{fatih.koksal@ttu.edu}

\address{Texas Tech University, Lubbock, TX 79409, U.S.A.}

\thanks{Research partly supported by a Simons Foundation Collaboration
  Grant for Mathematicians, award no.\ 281886, and by grant no.\
  H98230-14-0140 from the National Security Agency.}

\date{16 April 2015}

\keywords{Injective module; faithfully flat ring extension; co-base
  change}

% \subjclass[2010]{Primary 16D50. Secondary 13C11; 16E10; 13D05}
\subjclass[2010]{Primary 13C11. Secondary 13D05.}
% 16D50 Injective modules, self-injective rings 13C11 Injective and
% flat modules and ideals 16E10 Homological dimension 13D05
% Homological dimension

\begin{abstract}
  Let $R$ be a commutative ring and $S$ be an $R$-algebra.  It is
  well-known that if $N$ is an injective $R$-module, then $\Hom{S}{N}$
  is an injective $S$-module. The converse is not true, not even if
  $R$ is a commutative noetherian local ring and $S$ is its
  completion, but it is close: It is a special case of our main
  theorem that in this setting, an $R$-module $N$ with $\Ext{>0}{S}{N}
  =0$ is injective if $\Hom{S}{N}$ is an injective $S$-module.
\end{abstract}

\maketitle

\thispagestyle{empty}

% Guide
 %
% Alphabet: $N$ is an $R$-module $K$ is a kernel and a cotorsion
% $R$-module $M$ is an arbitrary $R$-module or $R$-complex or
% $S$-module The $i$th differential of an $R$-complex $M$ is written
% $\partial^{M}_{i} : M_{i} \rightarrow M_{i-1}$.  C is a cyclic
% module I is an injective resolution of N J is a left-injective
% resolution of N L is a complex built from I and J
%
% Assumptions: We assume that $R$ and $S$ are commutative rings, $S$
% is a faithfully flat $R$-algebra, $R$ is noetherian, and the
% projective dimension of every flat $R$-module is bounded by
% $d\geqslant0$.
%
% Basics: By \cite[Theorem 4.74]{MR1653294}, $S$ being a faithfully
% flat $R$-algebra implies that
% \begin{equation}
%   0 \lra R \lra S \lra S/R \lra 0\;
%   \label{seq}
% \end{equation} 
% is an exact sequence of $R$-modules, $S$ and $S/R$ are flat
% $R$-modules.
%
% Since $S$ is a flat $R$-module, every injective $S$-module is also
% an injective $R$-module; see \cite[Corollary 3.6A]{MR1653294}.
%
% $\Hom{S}{J}$ is an injective $S$-module for every injective
% $R$-module $J$, see \cite[Corollary 3.6B]{MR1653294}
%
% $\tp{Flat}{Flat}$ is flat ...
 %
% $\Hom{Flat}{Injective}$ is injective ...
 %
% $\tp{S}{F}$ is flat $S$-module for every flat $R$-module $F$ by 2.20
% Atiyah MacDonald.

\section*{Introduction}

\noindent
Faithfully flat ring extensions play a important role in commutative
algebra: Any polynomial ring extension and any completion of a
noetherian local ring is a faithfully flat extension. The topic of
this paper is transfer of homological properties of modules along such
extensions.

In this section, $R$ is a commutative ring and $S$ is a commutative
$R$-algebra.  It is well-known that if $F$ is a flat $R$-module, then
$\tp{S}{F}$ is a flat $S$-module, and the converse is true if $S$ is
faithfully flat over $R$. If $I$ is an injective $R$-module, then
$\tp{S}{I}$ need not be injective over $S$, but it is standard that
$\Hom{S}{I}$ is an injective $S$-module.  Here the converse is not
true, not even if $S$ is faithfully flat over $R$: Let $(R,\m)$ be a
regular local ring with $\m$-adic completion $S \ne R$.  The module
$\Hom{S}{R}$ is then zero---see e.g.\ Aldrich, Enoch, and
Lopez-Ramos~\cite{AEL-02}---and hence an injective $S$-module, but $R$
is not an injective $R$-module, as the assumption $S\ne R$ ensures
that $R$ is not artinian.  In this paper, we get close to a converse
with the following result.

\begin{res*}[Main Theorem]
  Let $R$ be noetherian and $S$ be faithfully flat as an $R$-module;
  assume that every flat $R$-module has finite projective
  dimension. Let $N$ be an $R$-module; if\, $\Hom{S}{N}$ is an
  injective $S$-module and $\Ext{n}{S}{N}=0$ holds for all $n > 0$,
  then $N$ is injective.
\end{res*}

\noindent
The result stated above follows from \thmref{1}. The assumption of
finite projective dimension of flat modules is satisfied by a wide
selection of rings, including rings of finite Krull dimension and
rings of cardinality at most $\aleph_n$ for some natural number $n$;
see Gruson, Jensen et.\ al. \prpcite[6]{CUJ70},
\thmcite[II.(3.2.6)]{LGrMRn71}, and \thmcite[7.10]{LGrCUJ81}. The
projective dimension of a direct sum of modules is the supremum of the
projective dimensions of the summands. A direct sum of flat modules
is flat, so the assumption implies that there is an upper bound $d$
for the projective dimension of a flat module. Notice also that the
condition of $\Ext{n}{S}{N}$ vanishing is finite in the sense that
vanishing is trivial for $n$ greater than the projective dimension of
$S$.
\begin{equation*}
  \ast \ \ast \ \ast
\end{equation*}
The project we report on here is part of K\"oksal's dissertation
work. While the question that started the project---when does
injectivity of $\Hom{S}{N}$ imply injectivity of $N$?---is natural, it
was a result of Christensen and Sather-Wagstaff~\cite{LWCSSW10} that
suggested that a non-trivial answer might be attainable. The main
result in \cite{LWCSSW10} is essentially the equivalent of our Main
Theorem for the relative homological dimension known as
\emph{Gorenstein injective dimension}. That the result was obtained
for the relative dimension before the absolute is already unusual; it
is normally the absolute case that serves as a blueprint for the
relative. In the end, our proof of the Main Theorem bears little
resemblance with the arguments in \cite{LWCSSW10}, and we do not
readily see how to employ our arguments in the setting of that paper.

\section{Injective modules} %%%%%%%%%%%%%%%%%%%%%%%%%%%%%%%%%%%%%%%%%%

\noindent
In the balance of this paper, $R$ is a commutative noetherian ring and
$S$ is a flat $R$-algebra. By an $S$-module we always mean a left
$S$-module. For convenience, we recall a few basic facts that will be
used throughout without further mention.

\begin{ipg}
  \label{modules}
  A tensor product of flat $R$-modules is a flat $R$-module.  For
  every flat $R$-module $F$ and every injective $R$-module $I$, the
  $R$-module $\Hom{F}{I}$ is injective.

  For every flat $R$-module $F$, the $S$-module $\tp{S}{F}$ is flat,
  and every flat $S$-module is flat as an $R$-module.  For every
  injective $R$-module $I$, the $S$-module $\Hom{S}{I}$ is injective,
  and every injective $S$-module is injective as an $R$-module.

  An $R$-module $C$ is called \emph{cotorsion} if one has
  $\Ext{1}{F}{C}=0$ (equivalently, $\Ext{>0}{F}{C}=0$) for every flat
  $R$-module $F$. It follows by Hom-tensor adjointness that
  $\Hom{F}{C}$ is cotorsion whenever $C$ is cotorsion and $F$ is flat.
\end{ipg}

\begin{ipg}
  Under the sharpened assumption that $S$ is faithfully flat, the exact
  sequence
  \begin{equation}
    \label{eq:seq}
    0 \lra R \lra S \lra S/R \lra 0 
  \end{equation}
  is pure. Another way to say this is that \eqref{seq} is an exact
  sequence of flat $R$-modules; see \cite[Theorems (4.74) and
  (4.85)]{Lam2}.
\end{ipg}

We work mostly in the derived category $\D[R]$ whose objects are
complexes of $R$-modules. The next paragraph fixes the necessary
terminology and notation.

\begin{ipg}
  Complexes are indexed homologically, so that the $i$th differential
  of a complex $M$ is written $\partial^{M}_{i} : M_{i} \rightarrow
  M_{i-1}$.  A complex $M$ is called \textit{bounded above} if
  $M_{v}=0$ holds for all $v \gg 0$, \textit{bounded below} if $M_{v}=0$
  holds for all $v \ll 0$, and $bounded$ if it is bounded above and below.
  Brutal \textit{truncations} of a complex $M$ are denoted
  $\Tha{n}{M}$ and $\Thb{n}{M}$, and good truncations are denoted
  $\Tsa{n}{M}$ and $\Tsb{n}{M}$; cf.~Weibel \cite[1.2.7]{Wei}.

  A complex $M$ is \textit{acyclic} if one has
  $\operatorname{H}(M)=0$, equivalently $M \is 0$ in $\D[R]$. Finally,
  $\RHom{\blank}{\blank}$ denotes the right derived homomorphism
  functor, and $\Ltp{\blank}{\blank}$ denotes the left derived tensor
  product functor.
\end{ipg}

The proof of \thmref{1} passes through a couple of reductions; the
first one is facilitated by the next lemma.

\begin{lem}
  \label{lem:2}
  Let $N$ be an $R$-module of finite injective dimension.  If $S$ is
  faithfully flat, $\Hom{S}{N}$ is an injective $R$-module, and
  $\Ext{n}{S}{N}=0$ holds for all $n > 0$, then $N$ is injective.
\end{lem}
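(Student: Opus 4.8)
The plan is to prove that the number $d \deq \id{N}$, which is finite by hypothesis, satisfies $d \le 0$; this makes $N$ injective. The engine of the argument is a single adjunction identity that converts the two hypotheses on $S$ into a vanishing statement, after which the pure sequence \eqref{seq} propagates that vanishing down from the top of an injective resolution of $N$.

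First I would record the key computation. For an arbitrary $R$-module $M$, derived Hom-tensor adjointness gives
\begin{equation*}
  \RHom{\tp{S}{M}}{N} \is \RHom{M}{\RHom{S}{N}} ,
\end{equation*}
where I have used that $S$ is flat, so that $\Ltp{S}{M} \is \tp{S}{M}$. The hypothesis $\Ext{n}{S}{N}=0$ for all $n>0$ says precisely that $\RHom{S}{N}$ is isomorphic in $\D[R]$ to the module $\Hom{S}{N}$ concentrated in degree $0$, and this module is injective by hypothesis. Hence $\RHom{M}{\Hom{S}{N}} \is \Hom{M}{\Hom{S}{N}}$ is concentrated in degree $0$, which yields
\begin{equation*}
  \Ext{i}{\tp{S}{M}}{N} = 0 \quad \text{for all } i>0 \text{ and every } R\text{-module } M .
\end{equation*}

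Next I would bring in purity. Tensoring the pure sequence \eqref{seq} with $M$ gives a short exact sequence
\begin{equation*}
  0 \lra M \lra \tp{S}{M} \lra \tp{(S/R)}{M} \lra 0 ,
\end{equation*}
to which I apply $\Hom{\blank}{N}$ and read off the long exact sequence of Ext-modules. Suppose toward a contradiction that $d \ge 1$. In degree $d$ the relevant stretch is
\begin{equation*}
  \Ext{d}{\tp{S}{M}}{N} \lra \Ext{d}{M}{N} \lra \Ext{d+1}{\tp{(S/R)}{M}}{N} .
\end{equation*}
Here the left-hand term vanishes by the computation above, since $d \ge 1 > 0$, and the right-hand term vanishes because $d+1 > d = \id{N}$. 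Therefore $\Ext{d}{M}{N}=0$ for every $R$-module $M$, which forces $\id{N} \le d-1$ --- contradicting $d = \id{N}$. Hence $d \le 0$, i.e.\ $N$ is injective.

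I expect the only genuine content to lie in the adjunction identity of the second step; the remainder is bookkeeping with one long exact sequence. The delicate point is that both hypotheses on $S$ enter exactly there: the Ext-vanishing collapses $\RHom{S}{N}$ onto a single module, and the injectivity of that module $\Hom{S}{N}$ collapses $\RHom{M}{\blank}$ onto degree $0$. Finiteness of $\id{N}$ is no less essential, as it is what annihilates the right-hand Ext-term in the final step and thereby lets the argument descend from the top of the resolution; without a finite bound there would be nothing to push down.
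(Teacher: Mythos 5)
Your proof is correct, and it reorganizes the paper's argument in a genuinely different way around the same core computation. The engine of your proof---the derived adjunction isomorphism $\RHom{\tp{S}{M}}{N} \is \RHom{M}{\RHom{S}{N}}$, together with the collapse of $\RHom{S}{N}$ onto the injective module $\Hom{S}{N}$, yielding $\Ext{i}{\tp{S}{M}}{N}=0$ for all $i>0$---is exactly the computation at the heart of the paper's proof. Where you diverge is in how the contradiction is extracted. The paper sets $i=\id{N}$, picks a test module $T$ with $\Ext{i}{T}{N}\ne 0$, and passes to its injective envelope $E$, using a long exact sequence and the bound $\id{N}=i$ to transfer the nonvanishing from $T$ to $E$; it then exploits purity differently than you do: because $E$ is injective, tensoring \eqref{seq} with $E$ gives a \emph{split} exact sequence, so $E$ is a direct summand of $\tp{S}{E}$, whence $\Ext{i}{\tp{S}{E}}{N}\ne 0$, contradicting the adjunction vanishing. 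You instead apply the contravariant Ext long exact sequence directly to the tensored pure sequence $0 \to M \to \tp{S}{M} \to \tp{S/R}{M} \to 0$ for an \emph{arbitrary} module $M$, letting the adjunction kill the middle term and the bound $d+1 > \id{N}$ kill the connecting term, so that $\Ext{d}{M}{N}=0$ for every $M$, contradicting $d=\id{N}$. Your route dispenses with injective envelopes and the splitting trick altogether, at no real cost, since the adjunction vanishing is available for every module anyway; the paper's route needs that vanishing only at one well-chosen module but pays for this economy with the envelope detour. Both arguments use purity of \eqref{seq}, the hypothesis $\Ext{>0}{S}{N}=0$, injectivity of $\Hom{S}{N}$, and finiteness of $\id{N}$ in essential ways, so your closing assessment of where the content lies is accurate.
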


\begin{proof}
  Let $i$ be the injective dimension of $N$.  There exists then an
  $R$-module $T$ such that $\Ext{i}{T}{N} \ne 0$.  Let $E$ be an
  injective envelope of $T$. The exact sequence $ 0 \to T \to E \to X
  \to 0\;$ induces an exact sequence of cohomology modules:
  \begin{equation*}
    \cdots \lra \Ext{i}{E}{N} \lra \Ext{i}{T}{N} 
    \lra \Ext{i+1}{X}{N} \lra \cdots.
  \end{equation*}
  Since $\Ext{i+1}{X}{N}=0$ while $\Ext{i}{T}{N} \neq 0$, we conclude
  that also $\Ext{i}{E}{N}$ is non-zero.  Now apply the functor
  $\tp{\blank}{E}$ to the pure exact sequence \eqref{seq} to get the
  following exact sequence of $R$-modules
  \begin{equation*}
    0 \lra E \lra \tp{S}{E} \lra \tp{S/R}{E} \lra 0\;.
  \end{equation*}
  As $E$ is injective the sequence splits, whence $E$ is a direct
  summand of the module $\tp{S}{E}$.  This implies
  $\Ext{i}{\tp{S}{E}}{N} \neq 0$.  On the other hand, for every $n>0$
  one has
  \begin{equation*}
    % \label{iso2}
    \begin{aligned}
      \Ext{n}{\tp{S}{E}}{N}
      & \dis \H[-n]{\RHom{\Ltp{S}{E}}{N}}\\
      & \dis \H[-n]{\RHom{E}{\RHom{S}{N}}}\\
      & \dis \H[-n]{\RHom{E}{\Hom{S}{N}}}\\
      & \dis \Ext{n}{E}{\Hom{S}{N}}\;,
    \end{aligned}
  \end{equation*}
  where the first isomorphism uses that $S$ is flat, the second is
  Hom-tensor adjointness in the derived category, and the third
  follows by the vanishing of $\Ext{>0}{S}{N}$. As $\Hom{S}{N}$ is
  injective, this forces $i=0$; that is, $N$ is injective.
\end{proof}

\begin{ipg}
  \label{BIK}
  Let $\SpecR$ be the set of prime ideals in $R$; for $\p\in\SpecR$
  set \mbox{$\kappa(\p) = R_\p/\p R_\p$}. To an $R$-complex $X$ one
  associates two subsets of $\SpecR$. The (small) \emph{support}, as
  introduced by Foxby \cite{HBF79}, is the set $\supp{X} =
  \setof{\p\in\SpecR}{\H{\Ltp{\kappa(\p)}{X}}\ne 0}$, and the
  \emph{cosupport}, as introduced by Benson, Iyengar and
  Krause~\cite{BIK-12}, is the set $\cosupp{X} =
  \setof{\p\in\SpecR}{\H{\RHom{\kappa(\p)}{X}}\ne 0}$. A complex $X$
  is acyclic if and only if $\supp{X}$ is empty if and only if
  $\cosupp{X}$ is empty; see \cite[(proof of) lem.2.6]{HBF79} and
  \thmcite[4.13]{BIK-12}. The derived category $\D$ is stratified by
  $R$ in the sense of \cite{BIK-11}, see 4.4 \emph{ibid.}, so
  \thmcite[9.5]{BIK-12} yields for $R$-complexes $X$ and $Y$:
  \begin{equation*}
    \cosupp{\RHom{Y}{X}} = \supp{Y} \cap \cosupp{X}\;.
  \end{equation*}
  If $S$ is faithfully flat over $R$ then, evidently, one has
  $\supp{S} = \SpecR$. In this case an $R$-complex $X$ is acyclic if
  $\RHom{S}{X}$ is acyclic.
\end{ipg}

\begin{lem}
  \label{lem:5}
  Let $I$ be an acyclic complex of injective $R$-modules. Assume that
  $S$ is faithfully flat and of finite projective dimension over
  $R$. If\, $\Hom{S}{I}$ is acyclic and $\Hom{S}{\Ker{\dif[n]{I}}}$ is
  an injective $R$-module for every $n \in \ZZ$, then $\Hom{M}{I}$ is
  acyclic for every $R$-module~$M$.
\end{lem}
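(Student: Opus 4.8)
My plan is to deduce acyclicity of $\Hom{M}{I}$ from acyclicity of $\RHom{S}{\Hom{M}{I}}$ via the faithful-flatness criterion recorded in \pgref{BIK}: when $S$ is faithfully flat, an $R$-complex $X$ is acyclic as soon as $\RHom{S}{X}$ is. One cannot argue inside $\D$ directly, since $I$ is acyclic and hence zero there; the content of the lemma is that the \emph{un-derived} complex $\Hom{M}{I}$ is acyclic, so the whole argument must stay at the level of honest Hom-complexes. The first step is to record that $\Hom{S}{I}$ is contractible: it is an acyclic complex of injective $R$-modules---each $\Hom{S}{I_n}$ is injective because $I_n$ is injective and $S$ is flat, see \pgref{modules}---whose module of $n$-cycles is $\Ker{\Hom{S}{\dif[n]{I}}} = \Hom{S}{\Ker{\dif[n]{I}}}$, and these are injective by hypothesis; an acyclic complex of injectives all of whose cycles are injective is contractible.

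Consequently, for every $R$-module $M$ the complex $\Hom{M}{\Hom{S}{I}}$ is contractible, in particular acyclic, and Hom--tensor adjointness gives $\Hom{M}{\Hom{S}{I}} \is \Hom{\tp{S}{M}}{I}$; thus $\Hom{\tp{S}{M}}{I}$ is acyclic for every $M$. Now I bring in the finite projective dimension. Fix a projective resolution $P \lra S$ with $P_i = 0$ for $i \notin \{0,\dots,d\}$, where $d = \pd{S}$. For the complex $Y = \Hom{M}{I}$ one then has $\RHom{S}{Y} \is \Hom{P}{Y} \is \Hom{\tp{P}{M}}{I}$, the first isomorphism because $P$ is a bounded complex of projectives resolving $S$ and the second by adjointness. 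The augmentation $\tp{P}{M} \lra \tp{S}{M}$ is a quasi-isomorphism of bounded complexes, since flatness of $S$ forces $\H[i]{\tp{P}{M}} = 0$ for $i>0$. So, granting the comparison below, $\RHom{S}{\Hom{M}{I}}$ is isomorphic in $\D$ to the acyclic complex $\Hom{\tp{S}{M}}{I}$, and the criterion from \pgref{BIK} finishes the proof.

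The main obstacle is this comparison step: I must show the quasi-isomorphism $\tp{P}{M} \lra \tp{S}{M}$ remains a quasi-isomorphism after applying $\Hom{\blank}{I}$, equivalently that $\Hom{C}{I}$ is acyclic for its mapping cone $C$, a \emph{bounded} acyclic complex. I would prove the general statement that $\Hom{C}{I}$ is acyclic whenever $C$ is a bounded acyclic complex and $I$ is any complex of injective $R$-modules. View $\Hom{C}{I}$ as the total complex of the double complex with $(p,q)$-entry $\Hom{C_p}{I_q}$. For each fixed $q$ the functor $\Hom{\blank}{I_q}$ is exact, because $I_q$ is injective, and hence sends the acyclic complex $C$ to an acyclic one; so the double complex is acyclic along the $C$-direction. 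As $C$ is bounded, the double complex occupies only finitely many degrees in that direction, and the total complex of a double complex that is bounded along, and acyclic in, one direction is acyclic. This is exactly where finiteness of $\pd{S}$ is indispensable: it makes $\tp{P}{M}$, hence $C$, bounded, and without boundedness the total-complex argument fails against the unbounded $I$.
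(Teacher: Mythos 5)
Your proof is correct, and it opens exactly the way the paper does---reduce, via the support/cosupport criterion in \pgref{BIK}, to showing that $\RHom{S}{\Hom{M}{I}}$ is acyclic, and compute the latter with a bounded projective resolution $P\to S$---but from there you take a genuinely different route. The paper uses the swap isomorphism $\Hom{P}{\Hom{M}{I}}\is\Hom{M}{\Hom{P}{I}}$ and argues one degree at a time: fixing $m$, it truncates $I$ at $m+d+1$, invokes the homotopy equivalence $\Hom{\pi}{J}$ for the bounded-above complex of injectives $J=\Tha{m+d+1}{I}$ (\lemcite[10.4.6]{Wei}), and then passes to the good truncation, which by the injective-cycles hypothesis is a bounded-above acyclic complex of injectives and hence splits. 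You instead argue globally: adjointness turns $\Hom{P}{\Hom{M}{I}}$ into $\Hom{\tp{P}{M}}{I}$; the injective-cycles hypothesis is spent once and for all to make $\Hom{S}{I}$ contractible, whence $\Hom{\tp{S}{M}}{I}\is\Hom{M}{\Hom{S}{I}}$ is acyclic; and the two sides are compared through the quasi-isomorphism $\tp{P}{M}\to\tp{S}{M}$, whose cone $C$ is a bounded acyclic complex, using your claim that $\Hom{C}{I}$ is acyclic. That claim is true: it is the bounded-strip case of the Acyclic Assembly Lemma \lemcite[2.7.3]{Wei}, and the boundedness of $C$ makes the relevant filtration of the total complex finite in each degree, so there is no convergence issue against the unbounded $I$ (alternatively, induct on the number of nonzero terms of $C$, splitting off the top isomorphism and using that each $\Hom{\blank}{I_q}$ is exact). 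As for what each approach buys: the paper's truncation argument stays entirely elementary---no double complexes, only splitness of bounded-above acyclic complexes of injectives---whereas yours avoids truncation bookkeeping, localizes each hypothesis to a single clearly identified step, and makes transparent that finiteness of $\pd{S}$ is precisely what keeps the cone bounded. At bottom both proofs run on the same fuel, namely the splitness forced by injectivity of the modules $\Hom{S}{\Ker{\dif[n]{I}}}$, but the intermediate objects (adjunction versus swap, cone versus truncation) are genuinely distinct.
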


\begin{proof}
  Let $M$ be an $R$-module; in view of \pgref{BIK} it is sufficient to
  show that the complex $\RHom{S}{\Hom{M}{I}}$ is acyclic. Set $d =
  \pd{S}$ and let $\mapdef{\pi}{P}{S}$ be a projective resolution with
  $P_i =0$ for all $i>d$. To see that the homology
  $\H{\RHom{S}{\Hom{M}{I}}} \is \H{\Hom{P}{\Hom{M}{I}}}$ is zero, note
  first that there is an isomorphism
  \begin{equation*}
    \Hom{P}{\Hom{M}{I}} \dis \Hom{M}{\Hom{P}{I}}\;.
  \end{equation*}
  Fix $m\in \ZZ$; the truncated complex $J = \Tha{m+d+1}{I}$ is a
  bounded above complex of injective $R$-modules, and so is
  $\Hom{P}{J}$. It follows that the induced morphism $\Hom{\pi}{J}$
  is a homotopy equivalence; see \lemcite[10.4.6]{Wei}. This explains
  the first isomorphism in the next display.  The second isomorphism,
  like the equality, is immediate from the definition of Hom. The
  complex $H = \Hom{S}{\Tsa{m+d+1}{I}}$ is acyclic, as $\Hom{S}{I}$ is
  acyclic by assumption and $\Hom{S}{\blank}$ is left exact. By
  assumption $\Hom{S}{\Ker{\dif[m+d+1]{I}}}$ is injective, so $H$ is a
  complex of injective modules; it is also bounded above, so it
  splits. It follows that $\Hom{M}{H}$ is~acyclic.
  \begin{align*}
    \H[m]{\Hom{M}{\Hom{P}{I}}} %%
    &= \H[m]{\Hom{M}{\Hom{P}{\Tha{m+d+1}{I}}}} \\
    &\is \H[m]{\Hom{M}{\Hom{S}{\Tha{m+d+1}{I}}}} \\
    &\is \H[m]{\Hom{M}{\Hom{S}{\Tsa{m+d+1}{I}}}} \\
    &= 0\;.\qedhere
  \end{align*}
\end{proof}

\begin{thm}
  \label{thm:1}
  Let $R$ be a commutative noetherian ring over which every flat
  module has finite projective dimension. Let $N$ be an $R$-module and
  $S$ be a faithfully flat $R$-algebra; the following conditions are
  equivalent.
  \begin{eqc}
  \item $N$ is injective.
  \item $\Hom{S}{N}$ is an injective $R$-module and $\Ext{n}{S}{N}=0$
    holds for all $n > 0$.
  \item $\Hom{S}{N}$ is an injective $S$-module and $\Ext{n}{S}{N}=0$
    holds for all $n > 0$.
  \end{eqc}
\end{thm}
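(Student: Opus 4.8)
The three implications \eqclbl{i}$\Rightarrow$\eqclbl{ii}, \eqclbl{i}$\Rightarrow$\eqclbl{iii}, and \eqclbl{iii}$\Rightarrow$\eqclbl{ii} I would dispose of immediately using the facts collected in \pgref{modules}. If $N$ is injective then $\Ext{n}{S}{N}=0$ for all $n>0$; moreover $\Hom{S}{N}$ is an injective $R$-module (since $S$ is flat and $N$ injective) and an injective $S$-module (coinduction of an injective along the flat map $R\to S$), which gives both \eqclbl{ii} and \eqclbl{iii}. Finally an injective $S$-module is injective over $R$, so \eqclbl{iii}$\Rightarrow$\eqclbl{ii}. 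The substance of the theorem is therefore the implication \eqclbl{ii}$\Rightarrow$\eqclbl{i}, and that is what the rest of the proof must establish.

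The plan for \eqclbl{ii}$\Rightarrow$\eqclbl{i} is first to convert the hypothesis into vanishing of $\Ext$ with $N$ in the \emph{second} variable. Since $\Ext{>0}{S}{N}=0$ one has $\RHom{S}{N}\is\Hom{S}{N}$, an injective module, so running Hom-tensor adjointness exactly as in the proof of \lemref{2} yields, for every $R$-module $M$,
\[
  \RHom{\tp{S}{M}}{N}\is\RHom{M}{\Hom{S}{N}}\is\Hom{M}{\Hom{S}{N}},
\]
whence $\Ext{>0}{\tp{S}{M}}{N}=0$. Next I would exploit purity: tensoring \eqref{seq} with $M$ gives a short exact sequence $0\to M\to\tp{S}{M}\to\tp{S/R}{M}\to 0$, and the long exact sequence of $\Ext$, together with the vanishing just obtained, produces the dimension shift $\Ext{n}{M}{N}\is\Ext{n+1}{\tp{S/R}{M}}{N}$ for $n>0$. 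Iterating, $\Ext{n}{M}{N}\is\Ext{n+k}{(S/R)^{\otimes k}\otimes M}{N}$ for every $k$.

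Now the global hypothesis enters. Writing $G_k=(S/R)^{\otimes k}$, each $G_k$ is flat, so for a projective resolution $P_\bullet\to M$ the modules $G_k\otimes P_i$ are flat and hence of projective dimension at most the uniform bound $d$; consequently $\pd{G_k\otimes M}\le\operatorname{fd}_R M+d$, a bound independent of $k$. Thus when $M$ has finite flat dimension the groups $\Ext{n+k}{G_k\otimes M}{N}$ vanish for $k\gg0$, and the dimension shift forces $\Ext{>0}{M}{N}=0$. In particular $N$ is cotorsion, and so is every cosyzygy $C_k$ of $N$: indeed $\Hom{S}{-}$ carries the injective resolution of $N$ to an injective resolution of the injective module $\Hom{S}{N}$, which therefore splits, so each $\Hom{S}{C_k}$ is injective, and $\Ext{>0}{S}{C_k}=0$ by dimension shifting—each $C_k$ again satisfies \eqclbl{ii}.

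The remaining—and genuinely hard—step is to remove the finiteness restriction on $M$, since an arbitrary test module $R/\mathfrak{a}$ in Baer's criterion need not have finite flat dimension. The natural device, the infinite coresolution of $M$ by the modules $\tp{S}{(G_k\otimes M)}$, computes $\RHom{M}{N}$ only up to a convergence problem for an unbounded $\Hom$-complex, and this is precisely the place where \lemref{5} and the finiteness of $\pd{S}\le d$ are indispensable: the finite projective resolution $P$ of $S$ supplies the boundedness that makes truncation legitimate, and \lemref{5} upgrades acyclicity of an $S$-dual to acyclicity of $\Hom{M}{-}$ for \emph{every} $M$. Concretely, I would aim to assemble the injective resolution of $N$ into an acyclic complex $I$ of injectives, with the $C_k$ among its kernels, meeting the hypotheses of \lemref{5} (using that the $\Hom{S}{C_k}$ are injective and that $\Hom{S}{I}$ stays acyclic), and then read off from its conclusion that the relevant kernels, and hence $N$, are injective. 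I expect controlling the negative end of this complex—producing the acyclic object without losing injectivity of the $S$-duals—to be the main obstacle; once it is in place \lemref{5} closes the argument, and should the construction instead yield finite injective dimension, \lemref{2} would finish directly.
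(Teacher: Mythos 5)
Your preparatory work is correct and genuinely useful: the adjunction computation giving $\Ext{>0}{\tp{S}{M}}{N}=0$ for every $M$, the dimension shift $\Ext{n}{M}{N}\is\Ext{n+k}{\tp{(S/R)^{\otimes k}}{M}}{N}$ obtained from purity of \eqref{seq}, the resulting vanishing of $\Ext{>0}{M}{N}$ whenever $M$ has finite flat dimension, and the observation that each cosyzygy $C_k$ again satisfies \eqclbl{ii}, with $\Hom{S}{C_k}$ injective by the splitting argument. In fact your purity argument proves that $N$ itself is cotorsion, which is slightly more than the paper extracts at the corresponding stage: the paper only establishes cotorsionness of the $d$-th cosyzygy $K=\Ker{\dif[-d]{E}}$, by shifting along the injective resolution $E$ and using $\pd{F}\le d$ for flat $F$.

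However, the proposal stops exactly where the proof has to begin. You write that you ``would aim to assemble'' an acyclic complex $I$ of injective modules meeting the hypotheses of \lemref{5}, and that you ``expect'' producing it without losing injectivity of the $S$-duals to be ``the main obstacle.'' That obstacle is the heart of the theorem, and no argument for it is offered, so the decisive step is missing. What fills it in the paper: let $\eta$ denote the pure sequence \eqref{seq}; splicing the exact sequences of flat modules $\tp{\eta}{(S/R)^{\otimes i}}$ yields an acyclic complex
\begin{equation*}
  G = 0 \to R \to S \to \tp{S}{S/R} \to \cdots \to \tp{S}{(S/R)^{\otimes i}} \to \cdots
\end{equation*}
concentrated in non-positive degrees. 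Since $K$ is cotorsion, $\Hom{G}{K}$ is acyclic, and its terms in positive degrees, $\Hom{\tp{S}{(S/R)^{\otimes i}}}{K}\is\Hom{(S/R)^{\otimes i}}{\Hom{S}{K}}$, are injective because $\Hom{S}{K}$ is injective and $(S/R)^{\otimes i}$ is flat. Splicing $\Thb{1}{\Hom{G}{K}}$ with the injective resolution of $K$ produces the acyclic complex $I$ of injectives; its kernels in degrees $n\ge 0$ are $\Hom{(S/R)^{\otimes n}}{K}$, which are cotorsion (whence $\H[n]{\Hom{S}{I}}=\Ext{1}{S}{\Ker{\dif[n+1]{I}}}=0$) and have injective $S$-duals $\Hom{(S/R)^{\otimes n}}{\Hom{S}{K}}$; in negative degrees the kernels are cosyzygies of $E$, covered by your splitting observation. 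These are precisely the hypotheses of \lemref{5}, which then gives $\Ext{1}{M}{K}=\H[-1]{\Hom{M}{I}}=0$ for every $M$, and \lemref{2} converts injectivity of $K$ into injectivity of $N$. Every ingredient of this construction already appears in your write-up---the pure sequence, its tensor powers, cotorsionness, injectivity of the $S$-duals of cosyzygies---and your stronger cotorsion result would even let one run it directly with $N$ and $E$ in place of $K$ and its resolution; but the assembly of $\Hom{G}{K}$ and the splice is the crux of the argument, and it is absent from the proposal.
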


\begin{proof}
  It is well-known that \eqclbl{i} implies \eqclbl{iii} implies
  \eqclbl{ii}, so we need to show that \eqclbl{i} follows from
  \eqclbl{ii}. Let $N \to E$ be an injective resolution, then
  $\Hom{S}{E}$ is a complex of injective $R$-modules. By assumption
  $\H[n]{\Hom{S}{E}}$ is zero for $n<0$, so $\Hom{S}{E}$ is an
  injective resolution of the module $\Hom{S}{N}$, which is injective
  by assumption. It follows that the co-syzygies
  \begin{equation*}
    \Ker{\dif[n]{\Hom{S}{E}}} = \Hom{S}{\Ker{\dif[n]{E}}}
  \end{equation*}
  are injective for all $n\le 0$. As remarked in the Introduction,
  there is an upper bound $d$ for the projective dimension of
  a flat $R$-module. Set $K = \Ker{\dif[-d]{E}}$; by \lemref{2} it is
   sufficient to show that $K$ is injective. The complex
  $J = \Shift[d]{(\Tha{-d}{E})}$ is an injective resolution of $K$, so
  we need to show that $\Ext{1}{M}{K} = \H[-1]{\Hom{M}{J}}$ is
  zero for every $R$-module $M$.

  For every flat $R$-module $F$ and all $i > 0$ one has $\Ext{i}{F}{K}
  \is \Ext{i+d}{F}{N} = 0$ by dimension shifting; that is, $K$ is
  cotorsion. For every $i>0$ the $i$-fold tensor product
  $(S/R)^{\otimes i}$ is a flat $R$-module, and we set $(S/R)^{\otimes
    0} = R$. Let $\eta$ denote the pure exact sequence \eqref{seq};
  splicing together the exact sequences of flat modules
  $\tp{\eta}{(S/R)^{\otimes i}}$ for $i\ge 0$ one gets an acyclic
  complex
  \begin{equation*}
    G = 0 \to R \to S \to \tp{S}{S/R} \to \tp{S}{(S/R)^{\otimes 2}} \to 
    \cdots \to \tp{S}{(S/R)^{\otimes i}} \to \cdots
  \end{equation*}
  concentrated in non-positive degrees. As $K$ is cotorsion, the
  functor $\Hom{\blank}{K}$ leaves each sequence $\tp{\eta}{(S/R)^{\otimes
      i}}$ exact, so the complex $\Hom{G}{K}$ is acyclic. For every
  $n>0$, the $R$-module
  \begin{equation*}
    \Hom{G}{K}_n = \Hom{\tp{S}{(S/R)^{\otimes n-1}}}{K} \is 
    \Hom{(S/R)^{\otimes n-1}}{\Hom{S}{K}} 
  \end{equation*}
  is injective; indeed, $\Hom{S}{K}$ is injective and $(S/R)^{\otimes
    n-1}$ is flat. Moreover, one has $\Hom{G}{K}_0 \is K$, so the
  complexes $\Thb{1}{\Hom{G}{K}}$ and $J$ splice together to yield an
  acyclic complex $I$ of injective $R$-modules.

  We argue that \lemref{5} applies to $I$. For $n < 0$ one has
  $\H[n]{\Hom{S}{I}} = \H[n]{\Hom{S}{J}} = \H[n-d]{\Hom{S}{E}} = 0$,
  and the module $\Hom{S}{\Ker{\dif[n]{I}}} =
  \Hom{S}{\Ker{\dif[n-d]{E}}}$ is injective. For $n \ge 0$ one has
  \begin{equation*}
    \Ker{\dif[n]{I}} = \Hom{\Im{\dif[-n]{G}}}{K} = \Hom{(S/R)^{\otimes n}}{K}\;.
  \end{equation*}
  Since $K$ is cotorsion and $(S/R)^{\otimes n}$ is flat, the module
  $\Ker{\dif[n]{I}}$ is cotorsion. The truncated complex $I_{\le n+1}$
  is an injective resolution of the module $\Ker{\dif[n+1]{I}}$, so
  for all $n \ge 0$ one has $\H[n]{\Hom{S}{I}} =
  \Ext{1}{S}{\Ker{\dif[n+1]{I}}} = 0$. Furthermore, the $R$-module
  $\Hom{S}{\Ker{\dif[n]{I}}} \is \Hom{(S/R)^{\otimes n}}{\Hom{S}{K}}$
  is injective.

  Now it follows from \lemref{5} that $\Hom{M}{I}$ is acyclic for
  every $R$-module $M$; in particular, one has $\H[-1]{\Hom{M}{J}} =
  \H[-1]{\Hom{M}{I}} = 0$.
\end{proof}

\section{Injective dimension}

\noindent
To draw the immediate consequences of our theorem, we need some
terminology.

\begin{ipg}
  An $R$-complex $I$ is \textit{semi-injective} if it is a complex of
  injective $R$-modules and the functor $\Hom{\blank}{I}$ preserves
  acyclicity.  A \emph{semi-injective resolution} of an $R$-complex
  $N$ is a semi-injective complex $I$ that is isomorphic to $N$ in
  $\D[R]$. If $N$ is a module, then an injective resolution of $N$ is
  a semi-injective resolution in this sense. The \textit{injective
    dimension} of an $R$-complex $N$ is denoted $\id{N}$ and defined~as
  \begin{equation*}
\id{N} = \inf \left\{ i \in \mathbb{Z}
  \left| \begin{array}{rl}
      \mbox{There is a semi-injective resolution} \\
      \mbox{$I$ of $N$ with $I_{n}=0$ for all $n < -i$}
    \end{array} \right. 
\right\}\;; 
\end{equation*}
see \cite[2.4.I]{LLAHBF91}, where ``DG-injective'' is the same as ``semi-injective''.
\end{ipg}

\begin{thm}
  \label{thm:2}
  Let $R$ be a commutative noetherian ring over which every flat
  module has finite projective dimension, and let $S$ be a flat
  $R$-algebra. For every $R$-complex $N$ there are inequalities
  \begin{equation*}
    \id{N} \ge \id[S]{\RHom{S}{N}} \ge \id{\RHom{S}{N}}\;,
  \end{equation*}
  and equalities hold if $S$ is faithfully flat.
\end{thm}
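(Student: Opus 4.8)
The plan is to establish the two inequalities for an arbitrary flat $S$ and then upgrade them to equalities under faithful flatness by a sandwich argument resting on \thmref{1}. For the inequalities I would argue directly from semi-injective resolutions. If $\id N = i$ is finite, choose a semi-injective resolution $E$ of $N$ over $R$ with $E_n = 0$ for $n < -i$. Since $\Hom S{I}$ is injective over $S$ for every injective $R$-module $I$, the complex $\Hom S E \is \RHom S N$ is a complex of injective $S$-modules with $(\Hom S E)_n = 0$ for $n < -i$; being bounded below it is semi-injective over $S$, so $\id[S]{\RHom S N} \le i$. For the second inequality, a semi-injective $S$-resolution $J$ of $\RHom S N$ with $J_n = 0$ for $n < -i'$ is, after restriction of scalars, a bounded-below complex of injective $R$-modules representing $\RHom S N$ in $\D[R]$, whence $\id{\RHom S N} \le i'$. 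The cases where one of the dimensions is infinite are vacuous.

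Now assume $S$ is faithfully flat and set $j = \id{\RHom S N}$; since the two inequalities give $\id{\RHom S N} \le \id[S]{\RHom S N} \le \id N$, it suffices to prove $\id N \le j$, and we may assume $j < \infty$. Hom-tensor adjointness in $\D[R]$ yields $\RHom{\tp S M}{N} \is \RHom M{\RHom S N}$ for every $R$-module $M$, so $\Ext{n}{\tp S M}{N} \is \Ext{n}{M}{\RHom S N}$ vanishes for all $n > j$. Tensoring the pure exact sequence \eqref{seq} and its powers into $M$ produces, exactly as in the proof of \thmref{1}, a pure-exact coresolution $0 \to M \to \tp S M \to \tp{S}{(S/R)}\otimes M \to \cdots$ whose terms all have the form $\tp S{((S/R)^{\otimes i}\otimes M)}$; chasing the associated long exact sequences gives the dimension-shifting isomorphisms $\Ext{n}{M}{N} \is \Ext{n+k}{(S/R)^{\otimes k}\otimes M}{N}$ for all $n > j$ and all $k \ge 0$.

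With these in hand, the endgame is a reduction to \thmref{1}. Taking $M = R$ gives $\H[-n]{N} \is \Ext{n+k}{(S/R)^{\otimes k}}{N}$ for $n > j$, and since each $(S/R)^{\otimes k}$ is flat, hence of projective dimension at most the uniform bound $d$, the right-hand side vanishes once $n+k$ exceeds $d - \inf N$; provided $N$ is bounded below, this forces $\H[-n]{N} = 0$ for all $n > j$, i.e.\ $\inf N \ge -j$. I would then pick an injective resolution $E$ of $N$ and set $K = \Ker{\dif[-j]{E}}$: dimension shifting together with the vanishing $\Ext{>j}{S}{N} = \H[<-j]{\RHom S N} = 0$ gives $\Ext{>0}{S}{K} = 0$, while the identification $\Hom S K = \Ker{\dif[-j]{\Hom S E}}$ exhibits $\Hom S K$ as the $(-j)$-cosyzygy of $\RHom S N$, so a second dimension shift and $\id{\RHom S N} = j$ show that $\Hom S K$ is an injective $R$-module. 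Then \thmref{1} forces $K$ to be injective, and the resolution $E$ truncates at $-j$, giving $\id N \le j$.

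I expect the genuine obstacle to be precisely the boldfaced proviso above: transferring homological boundedness below — equivalently, finiteness of injective dimension — from $\RHom S N$ to $N$, that is, ruling out $\inf N = -\infty$ while $\id{\RHom S N} < \infty$. Because $\RHom S{\blank}$ has finite projective amplitude $d$ and, by faithful flatness, reflects acyclicity (see \pgref{BIK}, where $\cosupp{\RHom S X} = \cosupp X$), one expects it to reflect boundedness below as well; but $\RHom S{\blank}$ does not commute with truncation, so making this precise — presumably through the acyclicity criterion of \pgref{BIK} applied to suitable truncations of $N$, or through the cosupport machinery there — is the delicate heart of the argument and the step I would treat with the most care.
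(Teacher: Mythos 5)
Your proposal does not prove the theorem as stated, and the gap is the one you flag yourself: everything after your first paragraph works only when the homology of $N$ is bounded below. Concretely, to conclude $\H[-n]{N}=0$ for a fixed $n>j$ you must choose $k$ with $n+k>d-\inf N$, which is impossible when $\H[m]{N}\ne 0$ for arbitrarily small $m$; and your endgame inherits the same dependence, because reading $\Ext{>0}{S}{K}=0$ off $\H[m]{\RHom{S}{N}}=0$ for $m<-j$ by ``dimension shifting'' amounts to treating $\Shift[j]{(\Tha{-j}{E})}$ as an injective resolution of $K=\Ker{\dif[-j]{E}}$, which is legitimate only after one knows $\H[m]{N}=0$ for $m<-j$. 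So what you have is a correct argument for complexes with homology bounded below, plus an accurate diagnosis of the obstacle; the unbounded case is not a verification left to the reader but the part of the statement your outline cannot reach.

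The paper closes exactly this hole by reversing the order of your two steps, and the reversal is the missing idea. No boundedness of $\H{N}$ is used at the outset: with $I$ a semi-injective resolution of $N$, the complex $\Hom{S}{I}$ is semi-injective over $R$ (by adjointness) and represents $\RHom{S}{N}$ in $\D$, so \cite[2.4.I]{LLAHBF91} applies to it directly and yields both that $\Hom{S}{K}$ is injective, where $K=\Ker{\dif[-i]{I}}$, and that $\H[m]{\Hom{S}{I}}=0$ for $m<-i$. The paper converts the latter vanishing into $\Ext{n}{S}{K}=0$ for $n>0$ --- all data here being read off truncations of $\Hom{S}{I}$ itself, never off a truncated resolution of $N$ --- and \thmref{1} then upgrades this to: $K$ is injective. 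Only now does homological boundedness appear, as a consequence rather than a hypothesis. Since $K$ is injective, the sequence $0\to\Tsb{-i}{I}\to I\to X\to 0$ is degreewise split, so it stays exact under $\Hom{S}{\blank}$ and $\Hom{S}{X}$ is acyclic; since $X$ is a bounded above complex of injectives it is semi-injective, so $\Hom{S}{X}$ computes $\RHom{S}{X}$; and faithful flatness, via the cosupport identity of \pgref{BIK}, forces $X$ itself to be acyclic, that is, $\H[n]{N}=0$ for $n<-i$. This is precisely the ``reflection of boundedness below'' you hoped to extract from \pgref{BIK}: it is obtained as reflection of acyclicity, applied not to a truncation of $N$ (where, as you rightly worry, $\Hom{S}{\blank}$ no longer computes $\RHom{S}{\blank}$) but to the complex $X$ of injective modules, and it is the injectivity of $K$, established first, that makes $X$ available. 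Your order --- boundedness first, cosyzygy second --- cannot be completed; the paper's order --- cosyzygy first, boundedness second --- is what works.

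A separate, repairable flaw: twice in your first paragraph you justify semi-injectivity by boundedness below, and that principle is false. It is bounded \emph{above} complexes of injectives (in the paper's homological indexing) that are automatically semi-injective; a bounded below complex of injectives need not be. For instance, over $R'=k[x]/(x^2)$ the complex $\cdots\xrightarrow{x}R'\xrightarrow{x}R'\to 0$, concentrated in degrees $\ge 0$, consists of injective $R'$-modules but is not semi-injective: the evident chain map from the doubly infinite acyclic complex of multiplications by $x$ is not null-homotopic. Both inequalities should instead be argued as the paper does: $\Hom{S}{E}$ is semi-injective over $S$ because $\Hom[S]{A}{\Hom{S}{E}}\is\Hom{A}{E}$ for every $S$-complex $A$, and a semi-injective $S$-complex $J$ is semi-injective over $R$ because $\Hom{A}{J}\is\Hom[S]{\tp{S}{A}}{J}$ and $S$ is flat. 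Your conclusions in that paragraph are correct, but as written they rest on a false lemma.
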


\begin{proof}
  Let $N$ be an $R$-complex and let $I$ be a semi-injective resolution
  of $N$.  In $\D[S]$ there is an isomorphism $\RHom{S}{N} \dis
  \Hom{S}{I}$. It follows by Hom-tensor adjointness that $\Hom{S}{I}$
  is a semi-injective $S$-complex, whence the left-hand inequality
  holds. As $S$ is flat over $R$, Hom-tensor adjointness also shows
  that every semi-injective $S$-complex is semi-injective over $R$. In
  particular, any semi-injective resolution of $\RHom{S}{N}$ over $S$
  is a semi-injective resolution over~$R$, and the second inequality
  follows.

  Assume now that $S$ is faithfully flat and that $\id{\RHom{S}{N}}
  \le i$ holds for some integer $i$.  Let $I$ be a semi-injective
  resolution of $N$; our first step is to prove that the $R$-module $K
  = \Ker{\partial^{I}_{-i}}$ is injective. As $\Hom{S}{\blank}$ is
  left exact one has
  \begin{equation*}
    \Ker{\partial^{\Hom{S}{I}}_{-i}} \dis \Hom{S}{K}\;.
  \end{equation*} 
  In $\D$ there is an isomorphism $\Hom{S}{I} \is \RHom{S}{N}$, and by
  previous arguments the $R$-complex $\Hom{S}{I}$ is semi-injective.
  It now follows from \cite[2.4.I]{LLAHBF91} that the $R$-module
  $\Hom{S}{K}$ is injective, and the truncated complex
  $\Tsb{-i}{\Hom{S}{I}} = \Hom{S}{\Tsb{-i}{I}}$ is isomorphic to
  $\RHom{S}{N}$ in $\D$. In particular, one has
  \begin{equation*}
    \Ext{n}{S}{K} = \H[-n]{\Hom{S}{\Shift[i]{(\Tha{-i}{I})}}} 
    = \H[-i-n]{\RHom{S}{N}} = 0
  \end{equation*}
  for all $n > 0$, so $K$ is injective by \thmref{1}.

  To conclude that $N$ has injective dimension at most $i$, it is now
  sufficient to show that $\H[n]{N}=0$ holds for all $n < -i$;
  see \cite[2.4.I]{LLAHBF91}. Let $X$ be the cokernel of the embedding
  $\mapdef{\iota}{\Tsb{-i}{I}}{I}$; the sequence $0 \to \Tsb{-i}{I}
  \to I \to X \to 0$ is a degree-wise split exact sequence of complexes
  of injective modules. In the induced exact sequence
  \begin{equation*}
    0 \lra \Hom{S}{\Tsb{-d}{I}} \lra \Hom{S}{I} \lra \Hom{S}{X} \lra 0\;,
  \end{equation*}
  the embedding is a homology isomorphism, so $\Hom{S}{X}$ is acyclic. As
  $X$ is a bounded above complex of injective modules, it is
  semi-injective. That is, the complex $\RHom{S}{X}$ is acyclic, and then it
  follows that $X$ is acyclic; see~\pgref{BIK}. Thus $\iota$ is a
  quasi-isomorphism, whence one has $\H[n]{N} = \H[n]{I} = 0$ for all
  $n < -i$.
\end{proof}

\bibliographystyle{amsplain} 

\def\cprime{$'$}
  \providecommand{\arxiv}[2][AC]{\mbox{\href{http://arxiv.org/abs/#2}{\sf
  arXiv:#2 [math.#1]}}}
  \providecommand{\oldarxiv}[2][AC]{\mbox{\href{http://arxiv.org/abs/math/#2}{\sf
  arXiv:math/#2
  [math.#1]}}}\providecommand{\MR}[1]{\mbox{\href{http://www.ams.org/mathscinet-getitem?mr=#1}{#1}}}
  \renewcommand{\MR}[1]{\mbox{\href{http://www.ams.org/mathscinet-getitem?mr=#1}{#1}}}
\providecommand{\bysame}{\leavevmode\hbox to3em{\hrulefill}\thinspace}
\providecommand{\MR}{\relax\ifhmode\unskip\space\fi MR }
% \MRhref is called by the amsart/book/proc definition of \MR.
\providecommand{\MRhref}[2]{%
  \href{http://www.ams.org/mathscinet-getitem?mr=#1}{#2}
}
\providecommand{\href}[2]{#2}

\end{document}